\numberwithin{equation}{section}
\newtheorem{Theorem}{Theorem}[section]
\newtheorem{Corollary}[Theorem]{Corollary}
{ \theoremstyle{definition}
\newtheorem{Example}[Theorem]{Example}
}
\newcommand{\mathd}{\mathrm{d}}
\begin{document}


\renewcommand{\thefootnote}{$\star$}

\newcommand{\arXivNumber}{1509.00886}

\renewcommand{\PaperNumber}{083}

\FirstPageHeading

\ShortArticleName{Certain Integrals Arising from Ramanujan's Notebooks}

\ArticleName{Certain Integrals Arising\\ from Ramanujan's Notebooks\footnote{This paper is a~contribution to the Special Issue
on Orthogonal Polynomials, Special Functions and Applications.
The full collection is available at \href{http://www.emis.de/journals/SIGMA/OPSFA2015.html}{http://www.emis.de/journals/SIGMA/OPSFA2015.html}}}

\Author{Bruce C.~{BERNDT}~$^\dag$ and Armin {STRAUB}~$^\ddag$}
\AuthorNameForHeading{B.C.~Berndt and A.~Straub}

\Address{$^\dag$~University of Illinois at Urbana--Champaign, 1409 W~Green St, Urbana, IL 61801, USA}
\EmailD{\href{mailto:berndt@illinois.edu}{berndt@illinois.edu}}

\Address{$^\ddag$~University of South Alabama, 411 University Blvd N, Mobile, AL 36688, USA}
\EmailD{\href{mailto:straub@southalabama.edu}{straub@southalabama.edu}}

\ArticleDates{Received September 05, 2015, in f\/inal form October 11, 2015; Published online October 14, 2015}

\Abstract{In his third notebook, Ramanujan claims that
$$ \int_0^\infty \frac{\cos(nx)}{x^2+1} \log x \,\mathd x + \frac{\pi}{2} \int_0^\infty \frac{\sin(nx)}{x^2+1} \mathd x = 0. $$
  In a following cryptic line, which only became visible in a recent reproduction of
  Ramanujan's notebooks, Ramanujan indicates that a similar relation exists
  if $\log x$ were replaced by $\log^2x$ in the f\/irst integral and $\log x$
  were inserted in the integrand of the second integral.  One of the goals of
  the present paper is to prove this claim by contour integration.  We further
  establish general theorems similarly relating large classes of inf\/inite
  integrals and illustrate these by several examples.}

\Keywords{Ramanujan's notebooks; contour integration; trigonometric integrals}

\Classification{33E20}

\renewcommand{\thefootnote}{\arabic{footnote}}
\setcounter{footnote}{0}

\section{Introduction}

If you attempt to f\/ind the values of the integrals
\begin{gather}\label{1}
\int_0^{\infty}\frac{\cos(nx)}{x^2+1}\log x \,\mathd x \qquad\text{and}\qquad\int_0^{\infty}\frac{\sin(nx)}{x^2+1}\mathd x, \qquad n>0,
\end{gather}
by consulting tables such as those of Gradshteyn and Ryzhik \cite{gr} or by
invoking a computer algebra system such as \emph{Mathematica}, you will be
disappointed, if you hoped to evaluate these integrals in closed form, that
is, in terms of elementary functions.  On the other hand, the latter integral
above can be expressed in terms of the exponential integral $\operatorname{Ei}(x)$~\cite[formula~(3.723), no.~1]{gr}.
Similarly, if $1/(x^2+1)$ is replaced by any even rational function with the
degree of the denominator at least one greater than the degree of the
numerator, it does not seem possible to evaluate any such integral in closed
form.

However, in his third notebook, on p.~391 in the pagination of the second
volume of \cite{nb}, Ramanujan claims that the two integrals in \eqref{1} are
simple multiples of each other.  More precisely,
\begin{gather}\label{2}
\int_0^{\infty}\frac{\cos(nx)}{x^2+1}\log x \,
\mathd x +\frac{\pi}{2}\int_0^{\infty}\frac{\sin(nx)}{x^2+1}\mathd x =0.
\end{gather}
Moreover, to the left of this entry, Ramanujan writes, ``contour
integration''.  We now might recall a couple of sentences of G.H.~Hardy from
the introduction to Ramanujan's \emph{Collected papers} \cite[p.~xxx]{cp},
``\dots\ he had [before arriving in England] never heard of \dots\ Cauchy's
theorem, and had indeed but the vaguest idea of what a function of a complex
variable was''.  On the following page, Hardy further wrote, ``In a few
years' time he had a very tolerable knowledge of the theory of functions
\dots''.  Generally, the entries in Ramanujan's notebooks were recorded by
him in approximately the years 1904--1914, prior to his departure for
England.  However, there is evidence that some of the entries in his third
notebook were recorded while he was in England.  Indeed, in view of Hardy's
remarks above, almost certainly, \eqref{2}~is such an entry.  A~proof of~\eqref{2} by contour integration was given by the f\/irst author in his book
\cite[pp.~329--330]{IV}.

The identity \eqref{2} is interesting because it relates in a simple way two
integrals that we are unable to individually evaluate in closed form.
On the other hand, the simpler integrals
\begin{gather*}
\int_0^{\infty}\frac{\cos(nx)}{x^2+1}
\mathd x=\frac{\pi e^{-n}}{2} \qquad\text{and}\qquad \int_{-\infty}^{\infty}\frac{\sin(nx)}{x^2+1}\mathd x =0
\end{gather*}
have well-known and trivial evaluations, respectively.

 With the use of the most up-to-date photographic techniques, a new edition
 of \emph{Ramanujan's Notebooks}~\cite{nb} was published in~2012 to help
 celebrate the 125th anniversary of Ramanujan's birth.  The new reproduction
 is vastly clearer and easier to read than the original edition.  When the
 f\/irst author reexamined~\eqref{2} in the  new edition, he was surprised to
 see that Ramanujan made a further claim concerning~\eqref{2} that was not
 visible in the original edition of~\cite{nb}.  In a~cryptic one line, he
 indicated that a relation similar to~\eqref{2} existed if $\log x$ were
 replaced by $\log^2x$ in the f\/irst integral and $\log x$ were inserted in the
 integrand of the second integral of~\eqref{2}.  One of the goals of the
 present paper is to prove (by contour integration) this unintelligible entry
 in the f\/irst edition of the notebooks~\cite{nb}.   Secondly, we establish
 general theorems relating large classes of inf\/inite integrals for which
 individual evaluations in closed form are not possible by presently known
 methods.  Several further examples are given.

\section{Ramanujan's extension of~(\ref{2})}

We prove the entry on p.~391 of \cite{nb} that resurfaced with the new printing of~\cite{nb}.

\begin{Theorem}\label{theorem1} We have
\begin{gather}\label{4}
\int_0^{\infty}\frac{\sin(nx)}{x^2+1}\mathd x +\frac{2}{\pi}\int_0^{\infty}\frac{\cos(nx)}{x^2+1}\log x\,\mathd x =0
\end{gather}
and
\begin{gather}\label{4a}
\int_0^{\infty}\frac{\sin(nx)\log x}{x^2+1}\mathd x+\frac{1}{\pi}\int_0^{\infty}\frac{\cos(nx)\log^2x}{x^2+1}\mathd x=\frac{\pi^2e^{-n}}{8}.
\end{gather}
\end{Theorem}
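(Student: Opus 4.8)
The plan is to follow Ramanujan's own marginal hint and use contour integration, treating both identities with the same contour but with an increasing power of the logarithm. First I would fix the branch $\log z = \log|z| + i\arg z$ with $-\pi/2 < \arg z < 3\pi/2$, so that $\log z$ is analytic throughout the upper half-plane apart from the branch point at the origin. Let $C$ be the positively oriented contour made up of the segment from $-R$ to $-\epsilon$, a small semicircle of radius $\epsilon$ about the origin in the upper half-plane, the segment from $\epsilon$ to $R$, and the large semicircle of radius $R$ in the upper half-plane closing back to $-R$.

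For \eqref{4} I would integrate $f(z) = \dfrac{e^{inz}\log z}{z^2+1}$ over $C$. Since $n>0$, we have $|e^{inz}| = e^{-ny}\le 1$ in the upper half-plane, so Jordan's lemma forces the large semicircle to contribute $O(R^{-2}\log R)\cdot R = o(1)$, while near the origin $f(z)\sim\log z$ makes the small arc contribute $O(\epsilon\log\epsilon)$; both vanish in the limit. The only enclosed pole is at $z=i$, where $\operatorname{Res}_{z=i} f = \frac{e^{-n}\log i}{2i} = \frac{\pi e^{-n}}{4}$ on using $\log i = i\pi/2$. On the negative real axis $z=-t$ with $t>0$, so $\log z = \log t + i\pi$ and $e^{inz}=e^{-int}$; substituting and combining with the positive-axis integral, the residue theorem gives a single complex identity whose real part is exactly \eqref{4} and whose imaginary part recovers the classical evaluation $\int_0^\infty \frac{\cos(nx)}{x^2+1}\mathd x = \frac{\pi e^{-n}}{2}$.

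For \eqref{4a} I would run the same argument with $g(z) = \dfrac{e^{inz}\log^2 z}{z^2+1}$. The arc estimates are unchanged (now $O(\epsilon\log^2\epsilon)$ and $O(R^{-2}\log^2 R)$), and $\operatorname{Res}_{z=i} g = \frac{e^{-n}\log^2 i}{2i} = -\frac{\pi^2 e^{-n}}{8i}$, so $\oint_C g = -\frac{\pi^3 e^{-n}}{4}$. The essential new feature is that on the negative real axis $\log^2 z = (\log t + i\pi)^2 = \log^2 t - \pi^2 + 2i\pi\log t$, and the cross term $2i\pi\log t$ is precisely what feeds a $\sin(nx)\log x$ contribution into the real part. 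Collecting terms, taking the real part, and eliminating $\int_0^\infty \frac{\cos(nx)}{x^2+1}\mathd x$ via the value $\frac{\pi e^{-n}}{2}$ found above, I expect to land exactly on \eqref{4a}.

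The computations themselves are routine; the part that most repays care is the bookkeeping across the branch cut — the consistent choice of $\arg z$ on the two halves of the real axis and the correct expansion of $\log^2 z$ on the negative side. In particular, I would double-check the coefficient of the $2i\pi\log t$ cross term, since it is absent in the simpler $\log z$ computation for \eqref{4} and is exactly what couples $\int_0^\infty \frac{\sin(nx)\log x}{x^2+1}\mathd x$ to $\int_0^\infty \frac{\cos(nx)\log^2 x}{x^2+1}\mathd x$; a sign or factor slip there is where an error would most likely hide.
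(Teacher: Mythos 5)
Your proposal is correct and is essentially the paper's own proof: the same semicircular contour $C_{R,\varepsilon}$, the same branch $-\tfrac{\pi}{2}<\arg z<\tfrac{3\pi}{2}$, the residue at $z=i$, and the same bookkeeping of $(\log x+i\pi)^k$ on the negative axis, with all of your residues and arc estimates checking out. The only organizational difference is that the paper integrates just $e^{inz}\log^2 z/(z^2+1)$ and extracts \eqref{4} from the imaginary part and \eqref{4a} from the real part (citing $\int_0^\infty \cos(nx)/(x^2+1)\,\mathd x=\pi e^{-n}/2$ as a known input), whereas you run an additional, simpler integral with $\log z$ to get \eqref{4} and that cosine evaluation simultaneously --- a harmless variant that makes the argument self-contained at the cost of one extra contour computation.
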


\begin{proof}  Def\/ine a branch of $\log z$ by $-\frac12\pi<\theta=\arg z\leq
  \frac32\pi$.  We integrate
\begin{gather*}
f(z):=\frac{e^{inz}\log^2z}{z^2+1}
\end{gather*}
over the positively oriented closed contour $C_{R,\varepsilon}$  consisting
of the semi-circle $C_{R}$ given by \linebreak \mbox{$z=Re^{i\theta}$},
$0\leq\theta\leq\pi$,
the interval $[-R,-\varepsilon]$, the semi-circle $C_{\varepsilon}$ given by
$z=\varepsilon e^{i\theta}$, $\pi\geq\theta\geq0$, and the interval $[\varepsilon,R]$,
where $0<\varepsilon<1$ and $R>1$. On the interior of  $C_{R,\varepsilon}$
there is a simple pole at~$z=i$, and so by the residue theorem,
\begin{gather}\label{5}
\int_{C_{R,\varepsilon}}f(z)\mathd z=2\pi i\frac{e^{-n} \cdot\big({-}\frac14\pi^2\big)}{2i}=-\frac{e^{-n}\pi^3}{4}.
\end{gather}
Parameterizing the respective semi-circles, we can readily show that
\begin{gather}\label{6}
\int_{C_{\varepsilon}}f(z)\mathd z=o(1),
\end{gather}
as $\varepsilon\to0$, and
\begin{gather}\label{7}
\int_{C_{R}}f(z)\mathd z=o(1),
\end{gather}
as $R\to\infty$.
Hence, letting $\varepsilon\to0$ and $R\to\infty$ and combining \eqref{5}--\eqref{7}, we conclude that
 \begin{gather}\label{8}
 - \frac{e^{-n}\pi^3}{4}=\int_{-\infty}^0\frac{e^{inx}(\log|x|+i\pi)^2}{x^2+1}\mathd x+\int_0^{\infty}\frac{e^{inx}\log^2x}{x^2+1}\mathd x\\
 \hphantom{- \frac{e^{-n}\pi^3}{4}}{}
 =\int_0^{\infty}\!\frac{(\cos(nx)-i\sin(n x))(\log x+i\pi)^2}{x^2+1}\mathd x+\int_0^{\infty}\!\frac{(\cos(nx)+i\sin(nx))\log^2x}{x^2+1}\mathd x.\nonumber
 \end{gather}
If we equate real parts in \eqref{8}, we f\/ind that
\begin{gather}\label{boo}
-\frac{e^{-n}\pi^3}{4} =\int_0^{\infty}\frac{\cos(nx)\big(2\log^2x-\pi^2\big)+2\pi\sin(nx)\log x}{x^2+1}\mathd x.
\end{gather}
It is easy to show, e.g., by contour integration, that
\begin{gather}\label{easy}
\int_0^{\infty}\frac{\cos(nx)}{x^2+1}\mathd x=\frac{\pi e^{-n}}{2}.
\end{gather}
(In his quarterly reports, Ramanujan derived~\eqref{easy} by a~dif\/ferent method \cite[p.~322]{I}.)
Putting this evaluation in~\eqref{boo}, we readily deduce~\eqref{4a}.
If we equate imaginary parts in~\eqref{8}, we deduce that
\begin{gather*}
0=\int_0^{\infty}\frac{\pi^2\sin(nx)+2\pi\cos(nx)\log x}{x^2+1}\mathd x,
\end{gather*}
from which the identity \eqref{4} follows.
\end{proof}

\section{A second approach to the entry at the top of p.~391}

\begin{Theorem} \label{thm:int:s}
For $s \in (- 1, 2)$ and $n \geq 0$,
  \begin{gather}
    \frac{\pi}{2} e^{- n} = \int_0^{\infty} \frac{\cos (n x - \pi s / 2)}{x^2
    + 1} x^s \mathd x. \label{eq:int:s}
  \end{gather}
\end{Theorem}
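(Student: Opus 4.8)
The plan is to use essentially the same contour integration technique as in Theorem~\ref{theorem1}, but now with the parameter $s$ playing the role that the logarithm powers played before. First I would define a branch of $z^s$ using the same cut as before, namely $-\tfrac{1}{2}\pi < \arg z \leq \tfrac{3}{2}\pi$, and integrate
\begin{gather*}
g(z) := \frac{e^{inz} z^s}{z^2+1}
\end{gather*}
over the same keyhole-type semicircular contour $C_{R,\varepsilon}$ consisting of the large upper semicircle $C_R$, the segment $[-R,-\varepsilon]$, the small upper semicircle $C_\varepsilon$, and the segment $[\varepsilon,R]$. The only pole inside is the simple pole at $z=i$, where $z^s = e^{i\pi s/2}$, so the residue theorem gives $\int_{C_{R,\varepsilon}} g(z)\,\mathd z = 2\pi i \cdot \tfrac{e^{-n} e^{i\pi s/2}}{2i} = \pi e^{-n} e^{i\pi s/2}$.

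Next I would show that the contributions of the two semicircular arcs vanish in the limit. On $C_\varepsilon$ the integrand is $O(\varepsilon^s)$ times the arc length $O(\varepsilon)$, which tends to $0$ as $\varepsilon \to 0$ precisely because $s > -1$. On $C_R$ the factor $e^{inz}$ decays like $e^{-nR\sin\theta}$ in the upper half plane (this is where $n \geq 0$ is used, together with a Jordan-type estimate for the boundary contributions near $\theta=0,\pi$), while $|z^s/(z^2+1)| = O(R^{s-2})$; since $s < 2$ this forces the arc integral to $0$. Combining the residue value with the two vanishing arcs and parameterizing the straight segments, the interval $[\varepsilon,R]$ contributes $\int_0^\infty \tfrac{e^{inx} x^s}{x^2+1}\,\mathd x$, while on $[-R,-\varepsilon]$ we write $x = -u$ so that $\log z = \log u + i\pi$ and hence $z^s = u^s e^{i\pi s}$, giving $\int_0^\infty \tfrac{e^{-inu} u^s e^{i\pi s}}{u^2+1}\,\mathd u$. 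Letting $\varepsilon\to 0$ and $R\to\infty$ yields
\begin{gather*}
\pi e^{-n} e^{i\pi s/2} = \int_0^\infty \frac{\big(e^{inx} + e^{i\pi s} e^{-inx}\big) x^s}{x^2+1}\,\mathd x.
\end{gather*}

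Finally I would factor out $e^{i\pi s/2}$ on both sides: the bracketed numerator becomes $e^{i\pi s/2}\big(e^{i(nx-\pi s/2)} + e^{-i(nx - \pi s/2)}\big) = 2 e^{i\pi s/2}\cos(nx - \pi s/2)$. Dividing through by $2e^{i\pi s/2}$ gives exactly \eqref{eq:int:s}. Note that this single complex identity already has real right-hand side, so no separate equating of real and imaginary parts is needed; the imaginary part vanishes automatically, which is a pleasant feature of this formulation. The main obstacle I expect is the careful justification of the arc estimate \eqref{7}-analogue on $C_R$ at the open endpoints $s=-1$ and $s=2$: the decay is borderline there, so the proof must use $e^{inz}$'s exponential decay (Jordan's lemma style) to control the large-semicircle integral rather than relying on the polynomial factor alone, and one should confirm that the strict inequalities $-1 < s < 2$ are exactly what make both arc contributions vanish.
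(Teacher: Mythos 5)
Your proposal is correct and follows essentially the same route as the paper's proof: the same branch of $z^s$, the same contour $C_{R,\varepsilon}$, the same residue computation giving $\pi e^{-n}e^{\pi i s/2}$, and the same final division by $2e^{\pi i s/2}$ to obtain \eqref{eq:int:s}. The only minor difference is in handling the full range $s\in(-1,2)$: the paper notes absolute convergence for $s\in(-1,1)$ and extends to $s\in(-1,2)$ by Dirichlet's test, while you lean on Jordan-type arc estimates and flag the borderline endpoints; both treatments are adequate.
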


Before indicating a proof of Theorem~\ref{thm:int:s}, let us see how the
integral~\eqref{eq:int:s} implies Ramanujan's integral relations~\eqref{4} and~\eqref{4a}. Essentially,
all we have to do is to take derivatives of \eqref{eq:int:s} with respect to~$s$ (and interchange the order of dif\/ferentiation and integration); then, upon
setting $s = 0$, we deduce~\eqref{4} and~\eqref{4a}.

First, note that upon setting $s = 0$ in \eqref{eq:int:s}, we obtain  \eqref{easy}.
On the other hand, taking a~derivative of~\eqref{eq:int:s} with respect to
$s$, and then setting $s = 0$, we f\/ind that
\begin{gather}
  0 = \int_0^{\infty} \frac{\cos (n x)}{x^2 + 1} \log x \,\mathd x +
  \frac{\pi}{2} \int_0^{\infty} \frac{\sin (n x)}{x^2 + 1} \mathd x,
  \label{eq:int:rama}
\end{gather}
which is the formula~\eqref{4} that Ramanujan recorded on p.~391. Similarly, taking
two derivatives of~\eqref{eq:int:s} and then putting $s=0$, we arrive at
\begin{gather*}
  0 = \int_0^{\infty} \frac{\cos (n x)}{x^2 + 1} \log^2 x\, \mathd x + \pi
   \int_0^{\infty} \frac{\sin (n x)}{x^2 + 1} \log x \,\mathd x -
   \frac{\pi^2}{4} \int_0^{\infty} \frac{\cos (n x)}{x^2 + 1} \mathd x,
\end{gather*}
which, using \eqref{easy}, simplif\/ies to
\begin{gather}\label{eq:int:rama2}
  \frac{\pi^3}{8} e^{- n} = \int_0^{\infty} \frac{\cos (n x)}{x^2 + 1} \log^2
   x \,\mathd x + \pi \int_0^{\infty} \frac{\sin (n x)}{x^2 + 1} \log x\,
   \mathd x.
\end{gather}
Note that this is  Ramanujan's previously unintelligible formula~\eqref{4a}. If we
likewise take $m$ derivatives before setting $s = 0$, we obtain the following
general set of relations connecting the integrals
\begin{gather*}
  I_m := \int_0^{\infty} \frac{\cos (n x)}{x^2 + 1} \log^m x \,\mathd x,
   \qquad J_m := \int_0^{\infty} \frac{\sin (n x)}{x^2 + 1} \log^m  x
   \,\mathd x.
\end{gather*}

\begin{Corollary}
  For $m \geq 1$,
  \begin{gather*}
    0 = \sum_{k = 0}^m \binom{m}{k} \left( \frac{\pi}{2} \right)^k (- 1)^{[k
     / 2]} \left\{ \begin{matrix}
       I_{m - k}, & \text{if $k$ is even}\\
       J_{m - k}, & \text{if $k$ is odd}
     \end{matrix} \right\} .
  \end{gather*}
\end{Corollary}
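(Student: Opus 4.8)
The plan is to carry out exactly the recipe sketched before the corollary: differentiate the identity~\eqref{eq:int:s} of Theorem~\ref{thm:int:s} precisely $m$ times with respect to~$s$ and then set $s=0$. Because the left-hand side $\frac{\pi}{2}e^{-n}$ is independent of~$s$ and $m\geq 1$, its $m$-th $s$-derivative is identically~$0$; hence the whole statement reduces to computing the $m$-th derivative of the integral on the right, evaluated at $s=0$, and identifying the pieces with $I_{m-k}$ and $J_{m-k}$.

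First I would justify moving the $m$-fold derivative inside the integral. Since only the value and the derivatives at $s=0$ are needed, it suffices to work for $s$ in a fixed compact neighborhood $[-a,a]$ of the origin with $0<a<1$. There every partial derivative of the integrand that arises is dominated, uniformly in such~$s$, by the integrable majorant $\frac{(x^{a}+x^{-a})|\log x|^{m}}{x^2+1}$ (the bounded trigonometric factors contributing only constants $(\pi/2)^k$); this function is integrable because $x^{-a}|\log x|^m$ is integrable near $0$ for $a<1$ and $x^{a-2}|\log x|^m$ is integrable at infinity for $a<1$. Dominated convergence then legitimizes the interchange of differentiation and integration.

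Next I would apply the Leibniz rule to the product $\cos(nx-\pi s/2)\cdot x^s$. The power factor is immediate: $\frac{\partial^{\,j}}{\partial s^{\,j}}x^s=x^s\log^{j}x$, which at $s=0$ is $\log^{j}x$. For the trigonometric factor, each $s$-differentiation produces a factor $\frac{\pi}{2}$ and cycles $\cos,\sin,-\cos,-\sin,\dots$, so a one-line induction gives
\begin{gather*}
\frac{\partial^{\,k}}{\partial s^{\,k}}\cos\Big(nx-\tfrac{\pi s}{2}\Big)
=\Big(\tfrac{\pi}{2}\Big)^{k}(-1)^{[k/2]}
\begin{cases}\cos\big(nx-\tfrac{\pi s}{2}\big),&k\text{ even,}\\[1mm]
\sin\big(nx-\tfrac{\pi s}{2}\big),&k\text{ odd,}\end{cases}
\end{gather*}
which at $s=0$ becomes $(\pi/2)^k(-1)^{[k/2]}$ times $\cos(nx)$ or $\sin(nx)$ according to the parity of~$k$.

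Finally, combining these through $\frac{d^m}{ds^m}(uv)=\sum_{k=0}^m\binom{m}{k}u^{(k)}v^{(m-k)}$, setting $s=0$, and integrating term by term, the $k$-th summand contributes $\binom{m}{k}(\pi/2)^k(-1)^{[k/2]}$ times $\int_0^\infty\frac{\cos(nx)\log^{m-k}x}{x^2+1}\,\mathd x=I_{m-k}$ when $k$ is even and the corresponding $J_{m-k}$ when $k$ is odd. This is precisely the claimed sum, and since it equals the vanishing $m$-th derivative of the constant left-hand side, the identity follows. The only genuine subtlety is the analytic justification of the second step; the remainder is the bookkeeping of the Leibniz expansion together with the elementary induction for the cosine derivatives, both of which are routine.
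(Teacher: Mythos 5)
Your proposal is correct and follows exactly the route the paper takes: differentiating the identity of Theorem~\ref{thm:int:s} $m$ times with respect to $s$ via the Leibniz rule and setting $s=0$, so that the constant left-hand side contributes $0$. In fact, you supply more detail than the paper does, since the paper merely asserts the interchange of differentiation and integration, whereas you justify it with an explicit integrable majorant on a neighborhood $[-a,a]\subset(-1,1)$ of the origin.
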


We now provide a proof of Theorem \ref{thm:int:s}.

\begin{proof}
  In analogy with our previous proof, we integrate
  \begin{gather*}
    f_s (z) := \frac{e^{i n z} z^s}{z^2 + 1}
  \end{gather*}
  over the contour $C_{R, \varepsilon}$ and let $\varepsilon \rightarrow 0$ and
  $R \rightarrow \infty$. Here, $z^s = e^{s \log z}$ with $-\frac12\pi<\arg z \leq \frac32\pi$, as above. By the residue theorem,
  \begin{gather}\label{aa}
    \int_{C_{R, \varepsilon}} f_s (z) \mathd z = 2 \pi i \frac{e^{- n} e^{\pi
     i s / 2}}{2 i} = \pi e^{- n} e^{\pi i s / 2} .
  \end{gather}
 Letting  $\varepsilon \rightarrow 0$ and $R \rightarrow \infty$, and using
 bounds for the integrand on the semi-circles as we did above, we deduce that
  \begin{gather}\label{bb}
    \lim_{\substack{R\to\infty\\\varepsilon\to0}}\int_{C_{R, \varepsilon}} f_s (z) \mathd z = \int_{- \infty}^{\infty}
     \frac{e^{i n x} x^s}{x^2 + 1} \mathd x = \int_0^{\infty} \frac{e^{- i n
     x} x^s e^{\pi i s}}{x^2 + 1} \mathd x + \int_0^{\infty} \frac{e^{i n x}
     x^s}{x^2 + 1} \mathd x.
  \end{gather}
  Combining \eqref{aa} and \eqref{bb}, we f\/ind that
  \begin{gather}
    \pi e^{- n} e^{\pi i s / 2} = \int_0^{\infty} \big(e^{i n x} + e^{- i n x} e^{\pi i s}\big)
    \frac{x^s }{x^2 + 1} \mathd x.\label{eq:int:s:i}
  \end{gather}
  We then divide both sides of \eqref{eq:int:s:i} by $2e^{\pi i s / 2}$ to
  obtain \eqref{eq:int:s}.
  Note that the integrals are absolutely convergent for $s \in ( - 1, 1)$.
  By Dirichlet's test, \eqref{eq:int:s:i} holds for
  $s \in ( - 1, 2)$.
\end{proof}

Replacing $s$ with $s + 1$ in \eqref{eq:int:s}, we obtain the following
companion integral.

\begin{Corollary}
  For $s \in (- 2, 1)$ and $n \geq 0$,
  \begin{gather}
    \frac{\pi}{2} e^{- n} = \int_0^{\infty} \frac{x \sin (n x - \pi s /
    2)}{x^2 + 1} x^s \mathd x. \label{eq:int:s:sin}
  \end{gather}
\end{Corollary}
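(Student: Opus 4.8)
The plan is to obtain this companion formula directly from Theorem~\ref{thm:int:s} by the substitution flagged in the sentence preceding the corollary, namely replacing $s$ with $s+1$ in the integral~\eqref{eq:int:s}. Since~\eqref{eq:int:s} is established for $s \in (-1, 2)$, the shifted parameter $s+1$ lies in $(-1, 2)$ precisely when $s \in (-2, 1)$, which is exactly the range asserted in the corollary; convergence on the new range is therefore inherited from Theorem~\ref{thm:int:s} with no further work. The constant $\frac{\pi}{2} e^{-n}$ on the left is unaffected, as it does not involve $s$.

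First I would track what happens to each factor in the integrand of~\eqref{eq:int:s} under $s \mapsto s+1$. The power $x^s$ becomes $x^{s+1} = x \cdot x^s$, which produces the extra factor of $x$ in the numerator of~\eqref{eq:int:s:sin}. The argument of the cosine becomes
\begin{gather*}
  nx - \frac{\pi (s+1)}{2} = nx - \frac{\pi s}{2} - \frac{\pi}{2}.
\end{gather*}
Applying the elementary phase shift $\cos(\theta - \pi/2) = \sin\theta$ with $\theta = nx - \pi s/2$ converts this cosine into $\sin(nx - \pi s/2)$.

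Assembling these observations, the substitution turns~\eqref{eq:int:s} into
\begin{gather*}
  \frac{\pi}{2} e^{-n} = \int_0^{\infty} \frac{\sin(nx - \pi s/2)}{x^2 + 1}\, x \cdot x^s \,\mathd x,
\end{gather*}
which is exactly~\eqref{eq:int:s:sin}. I do not expect any genuine obstacle here: the entire content is a change of variable in a parameter together with a trigonometric reduction. The only two points warranting care are the correct translation of the domain of validity from $(-1,2)$ to $(-2,1)$ and the correct sign in the identity $\cos(\theta - \pi/2) = \sin\theta$; both are immediate, so the corollary follows at once from Theorem~\ref{thm:int:s}.
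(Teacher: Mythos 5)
Your proposal is correct and follows exactly the paper's route: the paper derives this corollary by the single remark that replacing $s$ with $s+1$ in \eqref{eq:int:s} yields the companion integral, which is precisely your argument, with the range transfer $(-1,2)\mapsto(-2,1)$ and the identity $\cos(\theta-\pi/2)=\sin\theta$ spelled out. Nothing further is needed.
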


\begin{Example}
  Setting $s = 0$ in \eqref{eq:int:s:sin}, we f\/ind that
  \begin{gather}
    \frac{\pi}{2} e^{- n} = \int_0^{\infty} \frac{x \sin (n x)}{x^2 + 1}
    \mathd x, \label{eq:int:sin}
  \end{gather}
which is well known.  After taking one derivative with respect to $s$ in
\eqref{eq:int:s:sin}
and setting $s=0$, we similarly f\/ind that
  \begin{gather}\label{dd}
    0 = \int_0^{\infty} \frac{x \sin (n x)}{x^2 + 1} \log x\, \mathd x -
     \frac{\pi}{2} \int_0^{\infty} \frac{x \cos (n x)}{x^2 + 1} \mathd x,
  \end{gather}
  which may be compared with Ramanujan's formula \eqref{4}. As a
  second example, after taking two derivatives of \eqref{eq:int:s:sin} with
  respect to $s$, setting $s=0$, and using
  \eqref{eq:int:sin}, we arrive at the identity
  \begin{gather}\label{cc}
    \frac{\pi^3}{8} e^{- n} = \int_0^{\infty} \frac{x \sin (n x)}{x^2 + 1}
     \log^2 x\, \mathd x - \pi \int_0^{\infty} \frac{x \cos (n x)}{x^2 + 1}
     \log  x \,\mathd x.
  \end{gather}
\end{Example}

  We of\/fer a few additional remarks before generalizing our ideas in the
  next section.  Equating real
  parts in the identity~\eqref{eq:int:s:i} from the proof of
  Theorem~\ref{thm:int:s}, we f\/ind that
  \begin{gather}
    \pi e^{- n} \cos (\pi s / 2) = \int_0^{\infty} \big( \cos (n x) (1 + \cos (\pi
    s)) + \sin (n x) \sin (\pi s)\big) \frac{x^s }{x^2 + 1}\mathd x.
    \label{eq:int:s:re}
  \end{gather}
  Setting $s = 0$ in \eqref{eq:int:s:re}, we again obtain  \eqref{easy}.
  On the other hand, note that
  \begin{gather*}
    \left[ \frac{\mathd}{\mathd s} \big( \cos (n x) (1 + \cos (\pi s)) + \sin (n
     x) \sin (\pi s)\big) \right]_{s = 0} = \pi \sin (n x) .
  \end{gather*}
  Hence, taking a derivative of \eqref{eq:int:s:re} with respect to $s$, and
  then setting $s = 0$, we f\/ind that
  \begin{gather*}
    0 = \pi \int_0^{\infty} \frac{\sin (n x)}{x^2 + 1} \mathd x + 2
     \int_0^{\infty} \frac{\cos (n x)}{x^2 + 1} \log x\, \mathd x,
  \end{gather*}
  which is the formula \eqref{4} that Ramanujan recorded on p.~391. Similarly, taking
  two derivatives of \eqref{eq:int:s:re} and letting $s=0$, we deduce that
  \begin{gather*}
    - \frac{\pi^3}{4} e^{- n} = - \pi^2 \int_0^{\infty} \frac{\cos (n x)}{x^2
     + 1} \mathd x + 2 \pi \int_0^{\infty} \frac{\sin (n x)}{x^2 + 1} \log x\,
     \mathd x + 2 \int_0^{\infty} \frac{\cos (n x)}{x^2 + 1} \log^2x\, \mathd
     x,
  \end{gather*}
  which, using \eqref{easy}, simplif\/ies to
  \begin{gather*}
    \frac{\pi^3}{8} e^{- n} = \pi \int_0^{\infty} \frac{\sin (n x)}{x^2 + 1}
     \log x \, \mathd x + \int_0^{\infty} \frac{\cos (n x)}{x^2 + 1} \log^2x\,
     \mathd x
  \end{gather*}
  which is the formula \eqref{4a} arising from Ramanujan's unintelligible remark in the initial edition of~\cite{nb}.

  The integral \eqref{eq:int:s:re} has the companion
  \begin{gather}
    \pi e^{- n} \sin (\pi s / 2) = \int_0^{\infty} \big( \cos (n x) \sin (\pi s) +
    \sin (n x) (1 - \cos (\pi s))\big) \frac{x^s }{x^2 + 1}\mathd x,
    \label{eq:int:s:im}
  \end{gather}
  which is obtained by equating imaginary parts in~\eqref{eq:int:s:i}.
  However, taking derivatives of~\eqref{eq:int:s:im} with respect to~$s$, and
  then setting $s = 0$, does not generate new identities. Instead, we recover
  precisely the previous results. For instance, taking a derivative of~\eqref{eq:int:s:im} with respect to~$s$, and then setting $s = 0$, we again
  deduce~\eqref{easy}.
  Taking two derivatives of~\eqref{eq:int:s:im} with
  respect to~$s$, and then setting $s = 0$, we obtain
  \begin{gather*}
    0 = \pi^2 \int_0^{\infty} \frac{\sin (n x)}{x^2 + 1} \mathd x + 2 \pi
     \int_0^{\infty} \frac{\cos (n x)}{x^2 + 1} \log x\, \mathd x,
  \end{gather*}
  which is again Ramanujan's formula~\eqref{4}.

\section{General theorems}

The phenomenon observed by Ramanujan in \eqref{2}
can be generalized by replacing the rational function $1/(z^2+1)$ by a
general rational function $f(z)$ in which the denominator has degree at least
one greater than the degree of the numerator.  We shall also assume that
$f(z)$ does not have any poles on the real axis.  We could prove a theorem
allowing for poles on the real axis, but in such instances we would need to
consider the principal values of the resulting integrals on the real axis.
In our arguments above, we used the fact that $1/(z^2+1)$ is an even
function.  For our general theorem, we require that $f(z)$ be either even or
odd.  For brevity, we let $\text{Res}(F(z);z_0)$ denote the residue of a~function~$F(z)$ at a pole~$z_0$.  As above, we def\/ine  a branch of~$\log z$
by $-\frac12\pi<\theta=\arg z\leq \frac32\pi$.

For a rational function $f(z)$ as prescribed above and each
nonnegative integer $m$, def\/ine
\begin{gather}\label{IJ}
I_m:=\int_0^{\infty}f(x) \cos x \log^mx\,\mathd x \qquad\text{and}\qquad J_m:=\int_0^{\infty}f(x) \sin x \log^mx\,
\mathd x.
\end{gather}

\begin{Theorem}\label{theorem2} Let $f(z)$ denote a rational function in $z$, as described above, and let $I_m$ and $J_m$ be defined by~\eqref{IJ}.  Let
\begin{gather}\label{S}
 S:=2\pi i\sum_U\textup{Res}(e^{iz}f(z)\log^mz,z_j),
\end{gather}
where the sum is over all poles $z_j$ of $e^{iz}f(z)\log^mz$ lying in the upper half-plane~$U$.  Suppose that~$f(z)$ is even.  Then
\begin{gather}\label{S1}
S=\sum_{k=0}^m\binom{m}{k}(i\pi)^{m-k}(I_k-iJ_k)+(I_m+iJ_m).
\end{gather}
Suppose that $f(z)$ is odd.  Then
\begin{gather}\label{S2}
S=-\sum_{k=0}^m\binom{m}{k}(i\pi)^{m-k}(I_k-iJ_k)+(I_m+iJ_m).
\end{gather}
\end{Theorem}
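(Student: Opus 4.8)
The plan is to mimic the contour-integration argument used for Theorem~\ref{theorem1}, now with the integrand $g(z):=e^{iz}f(z)\log^m z$ in place of $e^{inz}\log^2 z/(z^2+1)$. First I would integrate $g$ over the same positively oriented contour $C_{R,\varepsilon}$, taking $R$ large enough to enclose every pole of $f$ in the upper half-plane and $\varepsilon$ small enough that the indentation at the origin excludes the branch point of $\log z$. Since $e^{iz}$ is entire and $\log^m z$ is analytic on $U$, the poles of $g$ interior to $C_{R,\varepsilon}$ are exactly the poles $z_j$ of $f$ lying in $U$, so the residue theorem gives $\int_{C_{R,\varepsilon}} g(z)\,\mathd z = S$, with $S$ as in~\eqref{S}.

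Next I would show the two circular-arc contributions vanish in the limit, exactly as in~\eqref{6} and~\eqref{7}. On $C_\varepsilon$ the factor $f$ is bounded (the origin is not a pole), $e^{iz}$ is bounded, and $|\log^m z|\le(\log(1/\varepsilon)+\pi)^m$, so the integral is $O\big(\varepsilon(\log(1/\varepsilon))^m\big)=o(1)$ as $\varepsilon\to0$. On $C_R$ the hypothesis on the degrees gives $|f(z)|=O(1/R)$, whence $|f(z)\log^m z|=O\big((\log R)^m/R\big)\to0$ uniformly, and Jordan's lemma yields $\int_{C_R}g(z)\,\mathd z\to0$ as $R\to\infty$. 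Letting $\varepsilon\to0$ and $R\to\infty$ then collapses $C_{R,\varepsilon}$ onto the real line, giving
\[
 S=\int_{-\infty}^0 e^{ix}f(x)\log^m x\,\mathd x+\int_0^\infty e^{ix}f(x)\log^m x\,\mathd x.
\]
In the first integral I would substitute $x\mapsto-x$; because the chosen branch assigns $\arg z=\pi$ to the negative real axis, $\log(-x)$ becomes $\log x+i\pi$, and this integral turns into $\int_0^\infty e^{-ix}f(-x)(\log x+i\pi)^m\,\mathd x$.

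Finally I would invoke the parity of $f$. If $f$ is even, $f(-x)=f(x)$; expanding $(\log x+i\pi)^m=\sum_{k=0}^m\binom{m}{k}(i\pi)^{m-k}\log^k x$ by the binomial theorem and recognizing $\int_0^\infty e^{-ix}f(x)\log^k x\,\mathd x=I_k-iJ_k$ together with $\int_0^\infty e^{ix}f(x)\log^m x\,\mathd x=I_m+iJ_m$ from the definitions~\eqref{IJ} yields~\eqref{S1}. If $f$ is odd, the substitution produces the extra sign $f(-x)=-f(x)$, which flips the sign of the entire sum and gives~\eqref{S2}.

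I expect the only genuinely delicate points to be analytic rather than algebraic: establishing that the half-line integrals $I_k$, $J_k$ converge and that the limit passage is legitimate. For even $f$ the parity forces the degree gap to be even, hence $f=O(1/x^2)$ and every integral converges absolutely; for odd $f$ the gap may be exactly one, so $f=O(1/x)$ and one must appeal to Dirichlet's test (as in the proof of Theorem~\ref{thm:int:s}) to obtain conditional convergence of $I_k$, $J_k$ and to justify splitting $\int_{-\infty}^\infty$ into its two halves. The bound on $C_R$ is a mild strengthening of Jordan's lemma to accommodate the slowly growing factor $\log^m z$, but $(\log R)^m/R\to0$ makes this harmless; the branch bookkeeping on the negative axis is the one place where a sign or a factor of $i\pi$ could easily be mismatched.
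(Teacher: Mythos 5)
Your proposal is correct and follows essentially the same route as the paper's proof: the same contour $C_{R,\varepsilon}$, the residue theorem giving $S$, vanishing of the arc integrals (the paper proves the $C_R$ estimate inline via $\sin\theta\geq 2\theta/\pi$, which is just the standard proof of the Jordan-type bound you invoke), the substitution $x\mapsto -x$ with $\log(-x)=\log x+i\pi$, and the binomial expansion split by parity of $f$. Your closing remarks on convergence (absolute for even $f$, conditional via Dirichlet's test for odd $f$) are a sound addition that the paper leaves implicit.
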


Observe that \eqref{S1} and \eqref{S2} are recurrence relations that enable
us to successively calculate~$I_m$ and~$J_m$.  With each succeeding value of~$m$, we see that two previously non-appearing integrals arise.  If~$f(z)$ is
even, then these integrals are $I_m$ and $J_{m-1}$, while if $f(z)$ is odd,
these integrals are $J_m$ and $I_{m-1}$.  The previously non-appearing
integrals appear in either the real part or the imaginary part of the
right-hand sides of~\eqref{S1} and~\eqref{S2}, but not both real and
imaginary parts.  This fact therefore does not enable us to explicitly
determine either of the two integrals.  We must be satisf\/ied with obtaining
recurrence relations with increasingly more terms.

\begin{proof}
We commence as in the proof of Theorem~\ref{theorem1}.  Let
  $C_{R,\varepsilon}$ denote the positively oriented contour consisting of
  the semi-circle~$C_{R}$ given by  $z=Re^{i\theta}$, $0\leq\theta\leq\pi$,
  $[-R,-\epsilon]$, the semi-circle~$C_{\varepsilon}$ given by $z=\varepsilon
  e^{i\theta}$, $\pi\geq\theta\geq0$, and $[\varepsilon,R]$, where
  $0<\varepsilon<d$, where~$d$ is the smallest modulus of the poles of~$f(z)$
  in~$U$.  We also choose~$R$ larger than the moduli  of all the poles of~$f(z)$ in~$U$. By the residue theorem,
\begin{gather}\label{1a}
\int_{C_{R,\varepsilon}}e^{iz}f(z)\log^mz\,\mathd z=S,
\end{gather}
where $S$ is def\/ined in~\eqref{S}.

We next directly evaluate the integral on the left-hand side of~\eqref{1a}.
As in the proof of Theorem~\ref{theorem1}, we can easily show that
\begin{gather}\label{2a}
\int_{C_{\varepsilon}}e^{iz}f(z)\log^mz\,\mathd z=o(1),
\end{gather}
as $\varepsilon$ tends to $0$.
Secondly, we estimate the integral over $C_R$.  By hypothesis, there exist a~positive constant $A$ and a~positive number~$R_0$, such that for $R\geq R_0$,
$|f(Re^{i\theta})|\leq A/R$.  Hence, for $R\geq R_0$,
\begin{gather}
\left|\int_{C_{R}}e^{iz}f(z)\log^mz\,\mathd z\right| =
\left|\int_0^\pi
  e^{iRe^{i\theta}}f(Re^{i\theta})\log^m(Re^{i\theta})iRe^{i\theta}\mathd\theta\right|
\nonumber\\
\hphantom{\left|\int_{C_{R}}e^{iz}f(z)\log^mz\,\mathd z\right|}{}
\leq\int_0^\pi e^{-R\sin\theta}|f(Re^{i\theta})|(\log R+\pi)^mR\,\mathd\theta\nonumber\\
\hphantom{\left|\int_{C_{R}}e^{iz}f(z)\log^mz\,\mathd z\right|}{}
 \leq A(\log R+\pi)^m\left(\int_0^{\pi/2}+\int_{\pi/2}^{\pi}\right)e^{-R\sin\theta}\mathd\theta.\label{3a}
\end{gather}
Since $\sin\theta\geq2\theta/\pi$, $0\leq\theta\leq\pi/2$, upon replacing $\theta$ by $\pi-\theta$, we f\/ind that
\begin{align}\label{4ab}
\int_{\pi/2}^{\pi}e^{-R\sin\theta}\mathd\theta&=\int_0^{\pi/2}e^{-R\sin\theta}\mathd\theta
\leq\int_0^{\pi/2}e^{-2R\theta/\pi}\mathd\theta
=\frac{\pi}{2R}\left(1-e^{-R}\right).
\end{align}
The bound~\eqref{4ab} also holds for the f\/irst integral on the far right-hand side of~\eqref{3a}.  Hence, from~\eqref{3a},
\begin{gather}\label{5a}
\left|\int_{C_{R}}e^{iz}f(z)\log^mz\,\mathd z\right|\leq A(\log R+\pi)^m\frac{\pi}{R}\left(1-e^{-R}\right)=o(1),
\end{gather}
as $R$ tends to inf\/inity.

Hence, so far, by \eqref{1a}, \eqref{2a}, and \eqref{5a}, we have shown that
\begin{gather}
S=\int_{-\infty}^0e^{ix}f(x)(\log|x|+i\pi)^m\mathd x+\int_0^{\infty}e^{ix}f(x)\log^mx\,\mathd x\nonumber\\
\hphantom{S}{}=\int_0^{\infty}\left\{e^{-ix}f(-x)(\log x+i\pi)^m+e^{ix}f(x)\log^mx\right\}\,\mathd x.\label{6a}
\end{gather}
Suppose f\/irst that $f(x)$ is even.  Then \eqref{6a} takes the form
\begin{gather*}
S =\int_0^{\infty}f(x)\left\{e^{-ix}(\log x+i\pi)^m+e^{ix}\log^mx\right\}\mathd x\nonumber\\
\hphantom{S}{} =\int_0^{\infty}f(x)\left\{e^{-ix}\sum_{k=0}^m\binom{m}{k}\log^kx(i\pi)^{m-k}+e^{ix}\log^mx\right\}\mathd x\nonumber\\
\hphantom{S}{} =\sum_{k=0}^m\binom{m}{k}(i\pi)^{m-k}(I_k-iJ_k)+(I_m+iJ_m),
\end{gather*}
which establishes \eqref{S1}.  Secondly, suppose that~$f(z)$ is odd.  Then, \eqref{6a} takes the form
\begin{gather}
S =\int_0^{\infty}f(x)\left\{-e^{-ix}(\log x+i\pi)^m+e^{ix}\log^mx\right\}\mathd x\nonumber\\
\hphantom{S}{} =\int_0^{\infty}f(x)\left\{-e^{-ix}\sum_{k=0}^m\binom{m}{k}\log^kx(i\pi)^{m-k}+e^{ix}\log^mx\right\}\mathd x\nonumber\\
\hphantom{S}{}=-\sum_{k=0}^m\binom{m}{k}(i\pi)^{m-k}(I_k-iJ_k)+(I_m+iJ_m),\label{8a}
\end{gather}
from which \eqref{S2} follows.
\end{proof}

\begin{Example}  Let $f(z)=z/(z^2+1)$. Then
\begin{gather*}
2\pi i\,\textup{Res}\left(\frac{e^{iz}z\log^mz}{z^2+1},i\right)=\frac{\pi i}{e}\left(\frac{i\pi}{2}\right)^m,
\end{gather*}
and so we are led by \eqref{S2} to the recurrence relation
\begin{gather}\label{e2}
\frac{\pi
  i}{e}\left(\frac{i\pi}{2}\right)^m=-\sum_{k=0}^m\binom{m}{k}(i\pi)^{m-k}(I_k-iJ_k)+(I_m+iJ_m),
\end{gather}
where
\begin{gather*}
I_m:=\int_0^{\infty}\frac{x \cos x \log^mx}{x^2+1}\mathd x \quad\text{and}\quad J_m:=\int_0^{\infty}\frac{x \sin x \log^mx}{x^2+1}
\mathd x.
\end{gather*}
(In the sequel, it is understood that we are assuming that $n=1$ in Theorem~\ref{theorem1} and in all our deliberations of the two preceding sections.)
If $m=0$, \eqref{e2} reduces to
 \begin{gather}\label{e3}
 J_0=\frac{\pi}{2e},
 \end{gather}
 which is \eqref{easy}. After simplif\/ication, if $m=1$, \eqref{e2} yields
 \begin{gather}\label{e4}
 -\frac{\pi^2}{2e}=-i\pi I_0-\pi J_0+2iJ_1.
 \end{gather}
 If we equate real parts in \eqref{e4}, we once again deduce~\eqref{e3}.  If we equate imaginary parts in~\eqref{e4}, we f\/ind that
 \begin{gather}\label{e5}
 J_1-\frac{\pi}{2}I_0=0,
 \end{gather}
 which is identical with~\eqref{dd}.  Setting $m=2$ in~\eqref{e2}, we f\/ind that
 \begin{gather}\label{e6}
 -\frac{i\pi^3}{4e}=\pi^2(I_0-iJ_0)-2i\pi(I_1-iJ_1)+2iJ_2.
 \end{gather}
 Equating real parts on both sides of~\eqref{e6}, we once again deduce~\eqref{e5}. If we equate imaginary parts in~\eqref{e6} and employ~\eqref{e3}, we arrive at
 \begin{gather}\label{e7}
 J_2-\pi I_1=\frac{\pi^3}{8e},
 \end{gather}
 which is the same as~\eqref{cc}.
 Lastly, we set $m=3$ in~\eqref{e2} to f\/ind that
 \begin{gather}\label{e8}
 \frac{\pi^4}{8e}=i\pi^3(I_0-iJ_0)+3\pi^2(I_1-iJ_1)-3i\pi(I_2-iJ_2)+2iJ_3.
 \end{gather}
 If we equate real parts on both sides of~\eqref{e8} and simplify, we deduce~\eqref{e7} once again. On the other hand, when we equate imaginary parts on
 both sides of~\eqref{e8}, we deduce that
 \begin{gather}\label{e9}
 2J_3-3\pi I_2-3\pi^2J_1+\pi^3I_0=0.
 \end{gather}
 A slight simplif\/ication of~\eqref{e9} can be rendered with the use of~\eqref{e5}.
\end{Example}

 We can replace the rational function $1 / ( x^2 +
1)$ in Theorem~\ref{thm:int:s} by other even rational functions~$f ( x)$ to
obtain the following
generalization of Theorem~\ref{thm:int:s}.  Its proof is in the same spirit
as that of Theorem~\ref{theorem2}.

\begin{Theorem}
  \label{thm:int:f:s}Suppose that~$f (z)$ is an even rational function with
  no real poles and with the degree of the denominator exceeding the degree
  of the numerator by at least~$2$. Then,
  \begin{gather*}
    \frac{\pi i}{e^{\pi i s / 2}}  \sum_U \operatorname{Res} ( e^{i n z} f ( z) z^s,
     z_j) = \int_0^{\infty} \cos (n x - \pi s / 2) f ( x) x^s \mathd x,
  \end{gather*}
  where the sum is over all poles~$z_j$ of~$f ( z)$ lying in the upper
  half-plane~$U$.
\end{Theorem}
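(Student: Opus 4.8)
The plan is to run exactly the contour argument of Theorem~\ref{theorem2} and Theorem~\ref{thm:int:s}, now carrying along the general even rational function $f$ in place of $1/(z^2+1)$. Concretely, I would integrate
\[
g_s(z) := e^{inz} f(z) z^s, \qquad z^s = e^{s\log z}, \quad -\tfrac{1}{2}\pi < \arg z \leq \tfrac{3}{2}\pi,
\]
over the same positively oriented contour $C_{R,\varepsilon}$ (the large upper semi-circle $C_R$, the segment $[-R,-\varepsilon]$, the small upper semi-circle $C_\varepsilon$, and the segment $[\varepsilon,R]$), choosing $\varepsilon$ smaller than, and $R$ larger than, the moduli of all poles of $f$ in $U$. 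The residue theorem then gives
\[
\int_{C_{R,\varepsilon}} g_s(z)\,\mathd z = 2\pi i \sum_U \operatorname{Res}(e^{inz} f(z) z^s, z_j).
\]

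Next I would show that both semi-circular contributions are negligible. On $C_\varepsilon$ the function $f$ is analytic (it has no real poles, in particular none at the origin), hence bounded near $0$, while $|z^s| = \varepsilon^s$ and the arc length is $O(\varepsilon)$; thus $\int_{C_\varepsilon} g_s(z)\,\mathd z = O(\varepsilon^{s+1}) = o(1)$ provided $s > -1$. On $C_R$ the degree hypothesis yields $|f(Re^{i\theta})| \leq A/R^2$ for large $R$, so the integrand is bounded by $A R^{s-1} e^{-nR\sin\theta}$; estimating $\int_0^\pi e^{-nR\sin\theta}\,\mathd\theta$ as in~\eqref{4ab} (which for $n>0$ supplies an extra factor of order $1/R$), the large-arc integral is $o(1)$ over the relevant range of $s$. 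This is the one place where the argument genuinely differs from Theorem~\ref{theorem2}: the extra factor $R^s$ coming from $z^s$ must be absorbed, and it is precisely the assumption that the denominator exceeds the numerator in degree by at least $2$, rather than merely~$1$, that supplies the decay needed to reach the full range $s < 2$ covered by Theorem~\ref{thm:int:s}. I expect this large-arc estimate, together with the attendant determination of the admissible range of $s$, to be the main technical point.

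With the arcs disposed of, letting $\varepsilon \to 0$ and $R \to \infty$ reduces the identity to
\[
2\pi i \sum_U \operatorname{Res} = \int_{-\infty}^0 e^{inx} f(x) x^s \,\mathd x + \int_0^\infty e^{inx} f(x) x^s \,\mathd x.
\]
In the first integral the chosen branch forces $x^s = |x|^s e^{i\pi s}$ on the negative axis; substituting $x \mapsto -x$ and using that $f$ is even, $f(-x) = f(x)$, converts it to $e^{i\pi s}\int_0^\infty e^{-inx} f(x) x^s\,\mathd x$. Adding the two pieces gives
\[
2\pi i \sum_U \operatorname{Res} = \int_0^\infty \big(e^{inx} + e^{i\pi s} e^{-inx}\big) f(x) x^s \,\mathd x.
\]
Finally I would factor $e^{inx} + e^{i\pi s}e^{-inx} = 2 e^{i\pi s/2}\cos(nx - \pi s/2)$ and divide both sides by $2e^{i\pi s/2}$, which is exactly the asserted identity. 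As in the proof of Theorem~\ref{thm:int:s}, the range of $s$ justified by absolute convergence can, if desired, be widened by appealing to Dirichlet's test.
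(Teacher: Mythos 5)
Your proposal is correct and is essentially the argument the paper intends: the paper proves this theorem only by remarking that it follows ``in the same spirit'' as Theorem~\ref{theorem2}, i.e., by integrating $e^{inz}f(z)z^s$ over the contour $C_{R,\varepsilon}$, discarding the two arcs, folding the negative half-line onto the positive one via evenness and the branch relation $z^s=|x|^se^{i\pi s}$, and dividing by $2e^{\pi i s/2}$, exactly as you do. Your explicit treatment of the arc estimates (including the role of the degree-$2$ hypothesis and the resulting range of $s$) fills in details the paper leaves implicit, and matches the proofs of Theorems~\ref{thm:int:s} and~\ref{theorem2}.
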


Note that, as we did for~\eqref{eq:int:s:sin}, we can replace~$s$ with $s + 1$
in Theorem~\ref{thm:int:f:s} to obtain a~corresponding result for odd rational
functions~$x f (x)$. This is illustrated in Example~\ref{eg:odd} below.

As an application, we derive from Theorem~\ref{thm:int:f:s} the following
explicit integral evaluation, which reduces to Theorem~\ref{thm:int:s} when $r
= 0$.

\begin{Theorem}
  \label{thm:int:sr}Let $r \geq 0$ be an integer. For $s \in (- 1, 2 ( r
  + 1))$ and $n \geq 0$,
  \begin{gather*}
    \int_0^{\infty} \frac{\cos (n x - \pi s / 2)}{( x^2 + 1)^{r + 1}} x^s
     \mathd x = \frac{\pi}{2} e^{- n} \sum_{k = 0}^r \frac{1}{2^{r + k}}
     \binom{r + k}{k} \sum_{j = 0}^{r - k} ( - 1)^j \binom{s}{j} \frac{n^{r -
     k - j}}{( r - k - j) !} .
  \end{gather*}
\end{Theorem}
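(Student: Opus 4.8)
The plan is to invoke Theorem~\ref{thm:int:f:s} with the even rational function $f(z) = 1/(z^2+1)^{r+1}$. This $f$ has no real poles, and the degree of its denominator exceeds that of its numerator by $2(r+1) \geq 2$, so the hypotheses are met. Its only pole in the upper half-plane is $z = i$, now of order $r+1$, so Theorem~\ref{thm:int:f:s} reduces the integral to
\begin{gather*}
\int_0^{\infty} \frac{\cos(nx - \pi s/2)}{(x^2+1)^{r+1}}x^s\,\mathd x = \frac{\pi i}{e^{\pi i s/2}}\operatorname{Res}\left(\frac{e^{inz}z^s}{(z^2+1)^{r+1}}, i\right).
\end{gather*}
Everything then hinges on evaluating this single residue; the absolute convergence for $s<2r+1$, extended to the stated range $s \in (-1, 2(r+1))$ by Dirichlet's test, follows exactly as in the proof of Theorem~\ref{thm:int:s}.

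To compute the residue at the pole of order $r+1$, I would factor $(z^2+1)^{r+1} = (z-i)^{r+1}(z+i)^{r+1}$ and use
\begin{gather*}
\operatorname{Res}\left(\frac{e^{inz}z^s}{(z^2+1)^{r+1}}, i\right) = \frac{1}{r!}\left[\frac{\mathd^r}{\mathd z^r}\frac{e^{inz}z^s}{(z+i)^{r+1}}\right]_{z=i}.
\end{gather*}
The three factors $e^{inz}$, $z^s$, and $(z+i)^{-(r+1)}$ each differentiate cleanly, so I would apply the trinomial Leibniz rule, summing over $a+b+c = r$ with multinomial coefficient $\tfrac{r!}{a!\,b!\,c!}$. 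Evaluating at $z=i$ gives the three derivatives $(in)^a e^{-n}$, $b!\binom{s}{b}i^{s-b}$, and $(-1)^c\tfrac{(r+c)!}{r!}(2i)^{-(r+1+c)}$; after these factorials are combined with $\tfrac{r!}{a!\,b!\,c!}$ and the overall $\tfrac1{r!}$, the residue becomes a sum over $a+b+c=r$ with combinatorial weight $\tfrac{(r+c)!}{r!\,a!\,c!}$.

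The step I expect to be the main obstacle is the careful bookkeeping of the powers of $i$. Multiplying the residue by the prefactor $\pi i/e^{\pi i s/2} = \pi i\, i^{-s}$, the powers of $i$ assembled from the leading factor, $i^{-s}$, $i^{s-b}$, $(in)^a$, and $(2i)^{-(r+1+c)}$ total $i^{\,a-b-c-r}$; invoking the constraint $a+b+c=r$ collapses this to $i^{-2(b+c)} = (-1)^{b+c}$, which together with the $(-1)^c$ from the last derivative reduces to $(-1)^b$. This is the key simplification that renders the answer real, as it must be. What then remains is routine: recognizing $\tfrac{(r+c)!}{r!\,c!} = \binom{r+c}{c}$ and $2^{-(r+1+c)} = \tfrac12\,2^{-(r+c)}$, and renaming $c \mapsto k$, $b \mapsto j$, $a = r-k-j$, whereupon the double sum over $a+b+c=r$ becomes exactly $\sum_{k=0}^r\sum_{j=0}^{r-k}$ and matches the claimed formula.
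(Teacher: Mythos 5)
Your proposal is correct and takes essentially the same route as the paper: both invoke Theorem~\ref{thm:int:f:s} with $f(z) = 1/(z^2+1)^{r+1}$ and reduce everything to the residue of $e^{inz}z^s/(z^2+1)^{r+1}$ at the order-$(r+1)$ pole $z=i$, with the same bookkeeping of powers of $i$ and the same reindexing to reach the double sum. The only difference is mechanical: you compute the residue via the $r$-fold multinomial Leibniz rule applied to the three factors $e^{inz}$, $z^s$, $(z+i)^{-(r+1)}$, whereas the paper extracts the coefficient of $x^r$ from the product of the corresponding three Taylor/binomial series of $\alpha(x+i)$ --- two presentations of the identical computation.
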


\begin{proof}
  Setting $f ( z) = 1 / ( z^2 + 1)^r$ in Theorem~\ref{thm:int:f:s}, we see that we need  to
  calculate the residue
  \begin{gather*}
    \operatorname{Res} \left( \frac{e^{i n z} z^s}{( z^2 + 1)^{r + 1}}, i \right) =
     \operatorname{Res} \left( \frac{\alpha (z)}{( z - i)^{r + 1}}, i \right),
  \end{gather*}
  where
  \begin{gather*}
    \alpha (z) = \frac{e^{i n z} z^s}{(z + i)^{r + 1}}
  \end{gather*}
  is analytic in a neighborhood of $z = i$. Equivalently, we calculate the
  coef\/f\/icient of $x^r$ in the Taylor expansion of $\alpha (x + i)$ around $x =
  0$. Using the binomial series
  \begin{gather*}
    \frac{1}{( x + a)^{r + 1}} = \sum_{k \geq 0} ( - 1)^k \binom{r +
     k}{k} x^k a^{- r - k - 1}
  \end{gather*}
  with $a = 2 i$, we f\/ind that
  \begin{gather*}
    \alpha (x + i)   =  e^{- n}  \frac{e^{i n x} (x + i)^s}{(x + 2 i)^{r +
    1}} \\
    \hphantom{\alpha (x + i)}{}
      =  e^{- n} \sum_{k \geq 0} ( - 1)^k \binom{r + k}{k} x^k (2 i)^{-
    r - k - 1} \sum_{j \geq 0} \binom{s}{j} x^j i^{s - j} \sum_{l
    \geq 0} \frac{(i n x)^l}{l!} .
  \end{gather*}
  Extracting the coef\/f\/icient of $x^r$, we conclude that
  \begin{gather*}
    \operatorname{Res} \left( \frac{e^{i n z} z^s}{( z^2 + 1)^{r + 1}}, i \right)   =
     \frac{e^{- n}}{( 2 i)^{r + 1}} \sum_{k = 0}^r \frac{( - 1)^k}{( 2 i)^k}
    \binom{r + k}{k} \sum_{j = 0}^{r - k} \binom{s}{j} i^{s - j} \frac{( i
    n)^{r - k - j}}{( r - k - j) !}\\
  \hphantom{\operatorname{Res} \left( \frac{e^{i n z} z^s}{( z^2 + 1)^{r + 1}}, i \right)}{}
    =  \frac{e^{- n} e^{\pi i s / 2}}{2 i} \sum_{k = 0}^r \frac{1}{2^{r +
    k}} \binom{r + k}{k} \sum_{j = 0}^{r - k} ( - 1)^j \binom{s}{j} \frac{n^{r
    - k - j}}{( r - k - j) !}.
  \end{gather*}
  Theorem \ref{thm:int:sr} now follows from Theorem~\ref{thm:int:f:s}.
\end{proof}

\begin{Example}
  In particular, in the case $s = 0$,
  \begin{gather}\label{ab}
    \int_0^{\infty} \frac{\cos (n x)}{( x^2 + 1)^{r + 1}} \mathd x =
     \frac{\pi}{2} e^{- n} \sum_{k = 0}^r \frac{1}{2^{r + k}} \binom{r + k}{k}
     \frac{n^{r - k}}{( r - k) !}.
  \end{gather}
  We note that, more generally, this integral can be expressed in terms of the modif\/ied Bessel function $K_{r+1/2}(z)$ of order $r+1/2$.
   Namely, we have \cite[formula~(3.773), no.~6]{gr}
   \begin{gather}\label{ab2}
    \int_0^{\infty} \frac{\cos (n x)}{( x^2 + 1)^{r + 1}} \mathd x = \left(
     \frac{n}{2} \right)^{r+1/2}  \frac{\sqrt{\pi}}{\Gamma(r+1)} K_{r+1/2} (n) .
  \end{gather}
  When $r\ge0$ is an integer, the Bessel function $K_{r+1/2}(z)$ is elementary and the right-hand side of~\eqref{ab2} evaluates to the right-hand side of~\eqref{ab}.
\end{Example}

On the other hand, taking a derivative with respect to $s$ before setting $s =
0$, and observing that, for $j \geq 1$,
\begin{gather*}
  \left[ \frac{\mathd}{\mathd s} \binom{s}{j} \right]_{s = 0} = \frac{(-
   1)^{j - 1}}{j},
\end{gather*}
we arrive at the following generalization of Ramanujan's formula
\eqref{4}.

\begin{Corollary}
  We have
  \begin{gather*}
      \int_0^{\infty} \frac{\sin (n x)}{( x^2 + 1)^{r + 1}} \mathd x
    + \frac{2}{\pi} \int_0^{\infty} \frac{\cos (n x)}{( x^2 + 1)^{r + 1}} \log  x\, \mathd x\\
    \qquad{} =   - e^{- n} \sum_{k = 0}^r \frac{1}{2^{r + k}} \binom{r
    + k}{k} \sum_{j = 1}^{r - k} \frac{1}{j}  \frac{n^{r - k - j}}{( r - k -
    j) !} .
  \end{gather*}
\end{Corollary}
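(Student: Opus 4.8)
The plan is to differentiate the explicit evaluation of Theorem~\ref{thm:int:sr} with respect to the parameter $s$ and then specialize to $s=0$, exactly as the remark preceding the statement indicates. I would start from
\begin{gather*}
  \int_0^{\infty} \frac{\cos (n x - \pi s / 2)}{( x^2 + 1)^{r + 1}} x^s
   \mathd x = \frac{\pi}{2} e^{- n} \sum_{k = 0}^r \frac{1}{2^{r + k}}
   \binom{r + k}{k} \sum_{j = 0}^{r - k} ( - 1)^j \binom{s}{j} \frac{n^{r -
   k - j}}{( r - k - j) !},
\end{gather*}
which holds for $s\in(-1,2(r+1))$. Since $s=0$ lies strictly inside this interval, both the integral and its formal $s$-derivative converge absolutely in a neighborhood of $s=0$, which is what licenses differentiating under the integral sign.

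On the left-hand side the entire $s$-dependence resides in $\cos(nx-\pi s/2)\,x^s$, and
\begin{gather*}
  \frac{\mathd}{\mathd s}\left[\cos(nx-\pi s/2)\,x^s\right]
  = \frac{\pi}{2}\sin(nx-\pi s/2)\,x^s + \cos(nx-\pi s/2)\,x^s\log x.
\end{gather*}
Putting $s=0$ reduces the bracket to $\frac{\pi}{2}\sin(nx)+\cos(nx)\log x$, so the differentiated left-hand side is
\begin{gather*}
  \frac{\pi}{2}\int_0^{\infty}\frac{\sin(nx)}{(x^2+1)^{r+1}}\,\mathd x
  + \int_0^{\infty}\frac{\cos(nx)\log x}{(x^2+1)^{r+1}}\,\mathd x.
\end{gather*}

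On the right-hand side the only factor depending on $s$ is $\binom{s}{j}$. The term $j=0$ is constant and contributes nothing, while for $j\geq1$ I would insert the stated value $\left[\frac{\mathd}{\mathd s}\binom{s}{j}\right]_{s=0}=(-1)^{j-1}/j$. The one point requiring care is the sign bookkeeping: each surviving summand acquires the factor $(-1)^j\cdot(-1)^{j-1}/j=-1/j$, so the alternating inner sum turns into $-\sum_{j=1}^{r-k}\frac{1}{j}\frac{n^{r-k-j}}{(r-k-j)!}$, exactly the expression in the statement. Equating the two differentiated sides and dividing by $\frac{\pi}{2}$ then produces the claimed identity; as a consistency check, when $r=0$ the inner sum is empty and one recovers Ramanujan's formula~\eqref{4}. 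The main obstacle is therefore not the algebra but the justification of differentiation under the integral sign, which I would settle by dominating the $s$-derivative of the integrand on a small interval about $s=0$: the extra $\log x$ factor is absorbed by the decay $x^s/(x^2+1)^{r+1}$, which remains integrable at both $x=0$ and $x=\infty$ for $s$ in a small neighborhood of $0$.
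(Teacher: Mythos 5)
Your proposal is correct and follows exactly the paper's route: the corollary is obtained by differentiating Theorem~\ref{thm:int:sr} with respect to $s$, setting $s=0$, and using $\bigl[\tfrac{\mathd}{\mathd s}\tbinom{s}{j}\bigr]_{s=0}=(-1)^{j-1}/j$ for $j\geq 1$, with the sign computation $(-1)^j(-1)^{j-1}=-1$ yielding the stated right-hand side. Your added care about justifying differentiation under the integral sign goes slightly beyond what the paper records, but the argument is the same.
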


We leave it to the interested reader to make explicit the corresponding
generalization of~\eqref{eq:int:rama2}.

\begin{Example}  \label{eg:odd}
  As a direct extension of~\eqref{eq:int:s:sin},  replacing
  $s$ with $s + 1$ in Theorem~\ref{thm:int:sr}, we obtain the following
  companion integral. For integers $r \geq 0$, and any $s \in (- 2, 2 r +
  1)$ and $n \geq 0$,
  \begin{gather*}
    \int_0^{\infty} \frac{x \sin (n x - \pi s / 2)}{( x^2 + 1)^{r + 1}} x^s
     \mathd x = \frac{\pi}{2} e^{- n} \sum_{k = 0}^r \frac{1}{2^{r + k}}
     \binom{r + k}{k} \sum_{j = 0}^{r - k} ( - 1)^j \binom{s + 1}{j}
     \frac{n^{r - k - j}}{( r - k - j) !} .
  \end{gather*}
  In particular, setting $s = 0$, we f\/ind that
  \begin{gather}\label{abc}
    \int_0^{\infty} \frac{x \sin (n x)}{( x^2 + 1)^{r + 1}} \mathd x =
     \frac{\pi}{2} e^{- n} \sum_{k = 0}^r \frac{1}{2^{r + k}} \binom{r + k}{k}
     \left\{ \frac{n^{r - k}}{( r - k) !} - \frac{n^{r - k - 1}}{( r - k - 1)
     !} \right\},
  \end{gather}
  while taking a derivative with respect to $s$ before setting $s = 0$ and
  observing that, for~$j \geq 2$,
  \begin{gather*}
    \left[ \frac{\mathd}{\mathd s} \binom{s + 1}{j} \right]_{s = 0} =
     \frac{(- 1)^j}{j (j - 1)},
  \end{gather*}
  we f\/ind that{\samepage
  \begin{gather*}
      \int_0^{\infty} \frac{x \cos (n x)}{( x^2 + 1)^{r + 1}} \mathd x
    - \frac{2}{\pi} \int_0^{\infty} \frac{x \sin (n x)}{( x^2 + 1)^{r + 1}} \log x \, \mathd x\\
   \qquad{} =   e^{- n} \sum_{k = 0}^r \frac{1}{2^{r + k}} \binom{r +
    k}{k} \left[ \frac{n^{r - k - 1}}{( r - k - 1) !} - \sum_{j = 2}^{r - k}
    \frac{1}{j (j - 1)} \frac{n^{r - k - j}}{( r - k - j) !} \right]\\
   \qquad{} =   \frac{2}{\pi} \int_0^{\infty} \frac{\cos (n x)}{( x^2 + 1)^{r + 1}} \mathd x
    - \frac{2}{\pi} \int_0^{\infty} \frac{x \sin (n x)}{( x^2 + 1)^{r + 1}} \mathd x\\
    \qquad\quad{} - e^{- n} \sum_{k = 0}^r \frac{1}{2^{r + k}} \binom{r + k}{k} \sum_{j = 2}^{r - k} \frac{1}{j (j - 1)} \frac{n^{r - k - j}}{( r - k - j) !},
  \end{gather*}
upon the employment of \eqref{ab}} and~\eqref{abc}.
\end{Example}

\subsection*{Acknowledgements}

We wish to thank Khristo Boyadzhiev, Larry Glasser and the referees for their careful and helpful suggestions.

\pdfbookmark[1]{References}{ref}
\LastPageEnding


\begin{thebibliography}{00}
\footnotesize\itemsep=0pt

\bibitem{I}
Berndt B.C., Ramanujan's notebooks. {P}art~{I}, \href{http://dx.doi.org/10.1007/978-1-4612-1088-7}{Springer-Verlag}, New York,
  1985.

\bibitem{IV}
Berndt B.C., Ramanujan's notebooks. {P}art~{IV}, \href{http://dx.doi.org/10.1007/978-1-4612-0879-2}{Springer-Verlag}, New York,
  1994.

\bibitem{gr}
Gradshteyn I.S., Ryzhik I.M., Table of integrals, series, and products, 8th
  ed., Academic Press Inc., San Diego, CA, 2014.

\bibitem{cp}
Ramanujan S., Collected papers, Cambridge University Press, Cambridge, 1927,
  {r}eprinted by Chelsea, New York, 1962, reprinted by Amer. Math. Soc.,
  Providence, RI, 2000.

\bibitem{nb}
Ramanujan S., Notebooks. {V}ols.~1,~2, Tata Institute of Fundamental Research,
  Bombay, 1957.

\end{thebibliography}
\end{document}